\documentclass[article]{amsart}
\usepackage{amsbsy}
\usepackage{amsmath}
\usepackage{amstext}
\usepackage{amsthm}
\usepackage{amscd}
\usepackage{amssymb}
\usepackage{amsfonts}

\theoremstyle{plain}
\newtheorem{thm}{Theorem}[section]
\newtheorem{cor}[thm]{Corollary}

\newtheorem{prop}[thm]{Proposition}

\theoremstyle{definition}

\theoremstyle{remark}

\numberwithin{equation}{section}

\newcommand{\hide}[1]{{}}

\providecommand{\keywords}[1]{\textbf{\textit{Index terms---}} #1}

\usepackage{etoolbox}
\AtBeginEnvironment{array}{\setlength{\arraycolsep}{50pt}}
\begin{document}

\title[Non-vanishing of Modular $L$-values and Fourier coefficients]{On the non-vanishing of modular $L$-values and Fourier coefficients of cusp forms}
\author{Jun hwi Min}
\email{beliefonme159@unist.ac.kr}
\address{Ulsan National Institute of Science and Technology, Ulsan, Korea}

\begin{abstract}
	We prove a non-vanishing result of modular $L$-values with quadratic twists, where the quadratic discriminants are in a  short interval.  Using this fact and Waldspurger's theorem, we improve the results of Balog-Ono[The chebotarev density theorem in short intervals and some questions of Serre, Journal of number theory. 91(2):356-371(2001)] on the non-vanishing of Fourier coefficients of half-integral weight eigenform.  
\end{abstract}
\keywords{Modular $L$-values, Fourier coefficients, Non-vanishing}
\maketitle

	\section{Introduction and statement of results}

		  Let $f$ be a Hecke eigenform of level $N$ and weight $2k$ where $k$ is a positive integer, and let $\chi_d$ be a primitive quadratic character associated to the quadratic field $\mathbb{Q}(\sqrt{d})$.  The study of the non-vanishing of $L(k,f,\chi_d)$ arose from the study of the rank of modular elliptic curves. The celebrated theorem of Kolyvagin asserted that if $L(s,E)$ has a simple zero at $s=1$ and there exists a quadratic discriminant $d<0$ coprime to $4N$ such that $L(1,E,\chi_d)\neq 0$, then the rank$E(\mathbb{Q})=1$(See the introduction of \cite{pomykala1997non}). Murty-Murty\cite{murty1991mean} showed that one can get rid of the non-vanishing condition, by showing that there exists an infinite family of quadratic discriminants with $L(1,E,\chi_d)\neq 0$. Even further, the quantitative estimates on the number of quadratic discriminants with $L(1,E,\chi_d)\neq 0$ has gained huge interests. Let 
		  \begin{align*}
		  	N_f(X):=\#\{d:d \text{ is a square-free quadratic discriminant},  |d|\leq X, L(k,f,\chi_d)\neq 0 \}.
		  \end{align*}
	 
	 The well-known conjecture on the non-vanishing of $L(k,f,\chi_d)$ asserts that $N_f(X)\gg X$. The best known lower bound is that of Ono-Skinner\cite{ono1998non}; when $f$ is a even weight newform of trivial nebentype, 
	 	 \begin{align*}
	 	N_f(X)\gg X/\log X.
	 \end{align*}	 
 
  	We are interested in a short interval analogue of $N_f(X)$, where $f(z)\in S_{2k}(N,\psi)$ is a normalized Hecke eigenform. Here, the set of quadratic discriminants are defined modulo $4N$, Namely
  	\begin{align*}
  		\mathcal{D}:=\{0<(-1)^kd: d\equiv v^2 \mod 4N \text{ for some } (v, 4N)=1\}.
  	\end{align*}
  	We denote $\gamma(4N):=\#\mathcal{D}$.
  	The counting function of the number of nonzero $L$-values in short interval is defined by
  	\begin{align*}
  		N_f(X,h):=\#\{d\in \mathcal{D}:d \text{ square-free}, X\leq |d|\leq X+h, L(k,f,\chi_d)\neq 0 \}.
  	\end{align*}	 
  	
    To bound $N_f(X,h)$, it is necessary to estimate the first moment of $L$-values in a short interval. That is,     
    \begin{align*}
    	S_f(X,h):&=\sideset{}{'}\sum_{\substack{d\in \mathcal{D}\\\ X\leq |d| \leq X+h}}L(k,f,\chi_d).
    \end{align*}
  		Here, the  superscript $'$ denotes that the sum is taken over the square-free numbers $d$.
		
		\begin{thm}\label{firstmoment}
			 For $X^{3/4+\epsilon}\leq h \leq X$, we have
			\begin{align*}
				S_f(X,h)=C_{N}L_f(k)h+O_{f,\epsilon}(hX^{-\epsilon}),
			\end{align*}
		 where the constants $C_N$ and $L_f(k)$ are given by 
			\begin{align*}
				C_N=\frac{3\gamma(4N)}{\pi^2N}\prod_{p|4N}(1-p^{-2})^{-1}
			\end{align*}
			and 
			\begin{align}
				L_f(k)=\sum_{\substack{n=rj^2}}\frac{a_n}{n^k}\prod_{\substack{p|n\\(p,4N)=1}}(1+p^{-1})^{-1}\neq 0,\label{lvalue}
			\end{align}
			respectively. In the R.H.S of (\ref{lvalue}), the indices $r$ and $j$ are positive integers with $r|(4N)^\infty$ and $(j,4N)=1$, respectively.  Note $L_f(s)$ is the $L$-function $L_{f,1}(s)$ given in \cite[p.385]{luo1997determination}. 
		\end{thm}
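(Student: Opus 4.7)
The plan is to follow the standard first-moment technique for quadratic twists of $L$-functions, modified to account for the short-interval constraint. First I would apply the approximate functional equation, writing
\[
L(k,f,\chi_d)=\sum_{n\geq 1}\frac{a_{n}\chi_d(n)}{n^{k}}V\!\left(\frac{n}{Y}\right)+\varepsilon_{d}\sum_{n\geq 1}\frac{a_{n}\chi_d(n)}{n^{k}}\widetilde V\!\left(\frac{n}{Y}\right),
\]
where $Y=|d|\sqrt N$, the cut-offs $V,\widetilde V$ are smooth and rapidly decreasing, and $\varepsilon_d$ is the root number (constant on the congruence class defining $\mathcal D$). Interchanging summations rewrites $S_f(X,h)$ as a sum over $n$ of $(a_n/n^k)$ times the short-interval character sum
\[
T(n;X,h):=\sideset{}{'}\sum_{\substack{d\in\mathcal D\\ X\leq |d|\leq X+h}}\chi_d(n).
\]

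I would then split the $n$-sum by the shape of $n$. For $n=rj^2$ with $r\mid(4N)^\infty$ and $(j,4N)=1$, the value $\chi_d(n)$ is constant once $d\bmod 4N$ and $(d,j)=1$ are fixed; after detecting squarefreeness by $\sum_{b^2\mid d}\mu(b)$, the count of admissible $d\in\mathcal D\cap[X,X+h]$ produces a local density proportional to $h$ whose Euler factors match $C_N\prod_{p\mid j,\,(p,4N)=1}(1+p^{-1})^{-1}$. Summing $a_n/n^k$ against these densities recovers the main term $C_NL_f(k)h$, and $L_f(k)\neq 0$ follows from its Euler product representation, exactly as in Luo's paper. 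For $n$ not of this diagonal shape, $\chi_d(n)$ becomes a genuine nontrivial real Dirichlet character in $d$ of conductor dividing $4nN$, and the central task is to bound the resulting off-diagonal contribution.

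To handle the off-diagonal part, I would introduce a smooth weight localising $d\in[X,X+h]$, detect squarefreeness and the congruence condition by Möbius and orthogonality, and apply Poisson summation on $d$ modulo $4nN$. The zero frequency vanishes by nontriviality, and the surviving frequencies evaluate to Gauss sums of size $\sqrt n$, weighted by the Fourier transform of the short-interval cut-off. Combining the resulting cancellation with Deligne's bound $|a_n|\ll n^{k-1/2+\epsilon}$, the AFE truncation $n\ll X^{1+\epsilon}$, and an averaging argument over the dual variable, I expect the total off-diagonal contribution to be $O(X^{3/4+\epsilon})$, which is $O(hX^{-\epsilon})$ exactly when $h\geq X^{3/4+\epsilon}$.

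The principal obstacle is this short-interval off-diagonal estimate: the naive Polya--Vinogradov bound on each $T(n;X,h)$ gives only $O(\sqrt{nN}\log nN)$, and summed against $(a_n/n^k)$ this yields $O(X^{1+\epsilon})$, which is comparable to the main term when $h=X$ and useless in the short-interval regime. One must therefore exploit cancellation on average in $n$ as well, via Cauchy--Schwarz or direct Poisson in both variables, while arranging the Möbius sieve (with sieve parameter up to $X^{1/2}$) and the smooth-weighted short-interval summation so that the full $\sqrt n$ cancellation is preserved in the dual side. Balancing these three ingredients cleanly is the delicate technical point, and it is precisely what forces the threshold $h\geq X^{3/4+\epsilon}$.
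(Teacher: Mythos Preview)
Your outline follows the Iwaniec--Luo framework closely and the main-term extraction is essentially what the paper does: approximate functional equation, write $n=rj^{2}m$ with $r\mid(4N)^{\infty}$, $(j,4N)=1$ and $m$ squarefree, and isolate the $m=1$ diagonal to produce $C_{N}L_{f}(k)h$. The paper carries out the off-diagonal via Gauss inversion on $\chi_d(m)$ and then splits the dual variable $b$ into $b=0$, small $b$, and large $b$, exactly in the spirit of your proposed Poisson step.

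There is, however, a concrete missing ingredient. When you detect squarefreeness of $d$ by $\sum_{a^{2}\mid d}\mu(a)$, the range $a\le \sqrt{X+h}$ cannot be handled uniformly by Poisson: for large $a$ the interval $[X/a^{2},(X+h)/a^{2}]$ is too short and the dual sum gains nothing. The paper deals with this by truncating the M\"obius sum at a parameter $A$ (eventually $A=X^{1/4-\epsilon}$) and treating the tail $a>A$ by a completely different mechanism: one rewrites $A(b^{2}|d|\sqrt N,\chi_{b^{2}d})$ back in terms of $L(k,f,\chi_d)$ via the Euler product, applies Cauchy--Schwarz over $d$, and invokes the \emph{second moment bound}
\[
\sideset{}{'}\sum_{|d|\le X}|L(k,f,\chi_d)|^{2}\ll_{\epsilon} X^{1+\epsilon}
\]
of Perelli--Pomyka\l a and Justus. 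This input is what makes the tail $R\ll A^{-1}(X+h)^{1/2+\epsilon}h^{1/2}+(X+h)^{1/2}hA^{-3+\epsilon}$ acceptable, and the paper states explicitly that Theorem~\ref{firstmoment} requires it. Your proposal mentions Cauchy--Schwarz only in passing and does not isolate this second-moment step; the phrases ``I expect'' and ``delicate technical point'' stand in for precisely the place where the argument would otherwise break down.

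A secondary remark: you locate the threshold $h\ge X^{3/4+\epsilon}$ in the off-diagonal Poisson error, but in the paper the binding constraint is the term $AX^{1/2+\epsilon}$ coming from the Euler--Maclaurin error when counting $d\in\mathcal D$ in the short interval during the evaluation of $S_{0}$, balanced against $A^{-1}hX^{\epsilon}$ and $A^{2}X^{-1/2+\epsilon}h$; with $A=X^{1/4-\epsilon}$ this forces $h\ge X^{3/4+\epsilon}$. So the threshold is driven as much by the main-term extraction and sieve truncation as by the genuine off-diagonal cancellation.
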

		
		To obtain Theorem \ref{firstmoment}, we need to make use of the second moment of $L$-values, given in \cite{perelli1997averages} and \cite{justus2008moments}. 
		\begin{thm}\cite{perelli1997averages,justus2008moments}\label{secondmoment}
			If $f$ is an eigenform, then
				\begin{align*}
				\sideset{}{'}\sum_{\substack{|d| \leq X}}|L(k,f,\chi_d)|^2\ll_\epsilon X^{1+\epsilon}.
			\end{align*}
		\end{thm}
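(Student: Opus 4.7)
The plan is to combine the approximate functional equation at the central point $s=k$ with Heath-Brown's large sieve inequality for real quadratic characters. The analytic conductor of $L(s,f,\chi_d)$ at the center is of order $Nd^2$, so the standard derivation yields
\begin{align*}
L(k,f,\chi_d) = \sum_{n\geq 1}\frac{a_n\chi_d(n)}{n^k}V_{+}\!\left(\frac{n}{Y_d}\right) + \varepsilon(f,d)\sum_{n\geq 1}\frac{a_n\chi_d(n)}{n^k}V_{-}\!\left(\frac{n}{Y_d}\right),
\end{align*}
where $Y_d\asymp |d|\sqrt{N}$, $|\varepsilon(f,d)|=1$, and $V_{\pm}$ are smooth weight functions of super-polynomial decay at infinity. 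Since $|a_n|\ll n^{k-1/2+\epsilon}$ (Deligne), truncating the two tails at $n\leq Y:=X\sqrt{N}(\log X)^{A}$ contributes a negligible error after squaring and summing over $|d|\leq X$. It therefore suffices to bound
\begin{align*}
\sideset{}{'}\sum_{|d|\leq X}\left|\sum_{n\leq Y}\frac{a_n\chi_d(n)}{n^k}W\!\left(\frac{n}{Y}\right)\right|^{2}
\end{align*}
for a smooth cutoff $W$.

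Next, I would invoke Heath-Brown's quadratic large sieve in the form
\begin{align*}
\sideset{}{'}\sum_{|d|\leq X}\left|\sum_{n\leq Y}b_n\chi_d(n)\right|^{2}\ll_{\epsilon}(XY)^{\epsilon}(X+Y)\sum_{n\leq Y}|b_n|^{2}
\end{align*}
applied to $b_n = a_n W(n/Y)/n^{k}$. The inner sum $\sum_{n\leq Y}|a_n|^{2}/n^{2k}$ is then controlled by the Rankin--Selberg bound $\sum_{n\leq Y}|a_n|^{2}\ll_{f} Y^{2k}$ combined with partial summation, giving an estimate of order $\log Y$. Combining these pieces, the total is $\ll_{\epsilon}(X+Y)X^{\epsilon}\ll_{f,\epsilon}X^{1+\epsilon}$, since the level $N$ is fixed.

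The main technical obstacle is that Heath-Brown's inequality is formulated for sums over \emph{odd} squarefree $d$ twisted by Kronecker symbols $\bigl(\tfrac{d}{\cdot}\bigr)$ on squarefree $n$, whereas in our set-up $d$ ranges over all squarefree integers with $|d|\leq X$ and $\chi_d(n)$ is the full Kronecker symbol attached to $\mathbb{Q}(\sqrt{d})$ evaluated on arbitrary $n$. This is handled by first writing $n = n_1 n_2^{2}$ with $n_1$ squarefree and using $\chi_d(n_2^{2}) = \mathbf{1}_{(n_2,d)=1}$, then factoring out the part of $n_1$ supported on primes dividing $2N$, applying quadratic reciprocity on the prime-to-$2N$ part to express $\chi_d(n_1)$ as $\bigl(\tfrac{d}{n_1}\bigr)$ times a Dirichlet character in $d$ of modulus dividing $4N$, and finally splitting $d$ into the finitely many residue classes modulo $4N$. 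Because $N$ is fixed, this bookkeeping produces only an $O_{N}(1)$ multiplicative loss, and Heath-Brown's inequality applies piecewise to yield the claimed bound.
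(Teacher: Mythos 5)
The paper does not actually prove this statement: Theorem \ref{secondmoment} is imported wholesale from the cited references (Perelli--Pomykala and Justus), with only the remark that their newform argument goes through for eigenforms. Your sketch is, in outline, exactly the method those references use --- approximate functional equation at $s=k$ with analytic conductor $\asymp N d^2$, truncation of the two sums near $|d|\sqrt{N}$, Heath-Brown's quadratic large sieve, and Rankin--Selberg plus partial summation to show $\sum_{n\le Y}|a_n|^2 n^{-2k}\ll \log Y$ --- so it is correct in approach, and the quantitative pieces fit together as you say to give $(X+Y)X^{\epsilon}\ll X^{1+\epsilon}$. The one place where the real work hides is the reduction to Heath-Brown's setting: after writing $n=n_1n_2^2$, the condition $\chi_d(n_2^2)=\mathbf{1}_{(n_2,D)=1}$ (note the conductor is the fundamental discriminant $D$, not always $d$) couples $n_2$ to $d$, so you cannot simply absorb the $n_2$-sum into the coefficients $b_{n_1}$; you need a weighted Cauchy--Schwarz (or M\"obius removal of the coprimality) in $n_2$ before applying the sieve to each inner $n_1$-sum, which works because $|b_n|\ll n^{-1/2+\epsilon}$ makes the $n_2$-weights summable, and likewise the $2$-part and the residue-class splitting should be taken modulo $8N$ rather than $4N$ to control $\chi_d(2)$ and even $d$. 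A pleasant byproduct of your route, worth stating explicitly, is that it only uses the Deligne/Hecke bound and Rankin--Selberg, never newform-specific input, so it directly justifies the paper's claim that the bound holds for eigenforms (indeed for arbitrary cusp forms), rather than leaving that extension as an assertion.
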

	 	The authors in \cite{perelli1997averages} and \cite{justus2008moments} proved Theorem \ref{secondmoment} for newforms, but the discussion goes through for eigenforms as well.  
		
		By Cauchy's inequality, we have an immediate corollary. 
		\begin{cor}\label{nonvanishing} For $X^{3/4+\epsilon}\leq h\leq X$, we have
			\begin{equation*}
				N_f(X,h)\gg \frac{h^2}{X^{1+\epsilon}}.
			\end{equation*}
			The implied constant only depends on $f,\epsilon$.
		\end{cor}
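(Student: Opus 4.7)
The plan is to deduce the lower bound for $N_f(X,h)$ by combining the asymptotic formula for the first moment (Theorem \ref{firstmoment}) with the second moment bound (Theorem \ref{secondmoment}) through Cauchy--Schwarz, in the standard way. Specifically, I would write
\begin{align*}
S_f(X,h)=\sideset{}{'}\sum_{\substack{d\in\mathcal{D}\\ X\leq |d|\leq X+h\\ L(k,f,\chi_d)\neq 0}}L(k,f,\chi_d),
\end{align*}
since the $d$ with $L(k,f,\chi_d)=0$ contribute nothing, and then apply Cauchy's inequality to the resulting sum, bounding it by $N_f(X,h)^{1/2}$ times the square root of the sum of $|L(k,f,\chi_d)|^2$ over the same restricted range.

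Next, I would enlarge the set of summation in the second moment to all square-free $d$ with $|d|\leq X+h\leq 2X$ so that Theorem \ref{secondmoment} applies and produces the bound $\ll_\epsilon X^{1+\epsilon}$. On the other hand, Theorem \ref{firstmoment} gives $S_f(X,h)=C_N L_f(k)h+O_{f,\epsilon}(hX^{-\epsilon})$, and since $L_f(k)\neq 0$ and $C_N>0$, the main term dominates and so $|S_f(X,h)|\gg_f h$ for $X$ large enough (using the admissible range $h\geq X^{3/4+\epsilon}$ to ensure the error term is genuinely of smaller order).

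Combining these two inputs, the chain of inequalities
\begin{align*}
h^2\ll|S_f(X,h)|^2\leq N_f(X,h)\cdot \sideset{}{'}\sum_{|d|\leq 2X}|L(k,f,\chi_d)|^2\ll_\epsilon N_f(X,h)\,X^{1+\epsilon}
\end{align*}
yields the claimed lower bound $N_f(X,h)\gg h^2/X^{1+\epsilon}$, with the implied constant depending only on $f$ and $\epsilon$. There is no real obstacle here since all the genuine work has been carried out in Theorems \ref{firstmoment} and \ref{secondmoment}; the only point to verify carefully is that the range $X^{3/4+\epsilon}\leq h\leq X$ is sufficient for the main term in $S_f(X,h)$ to exceed the error, and that the $\epsilon$ from the second moment can be absorbed into the $\epsilon$ in the final statement.
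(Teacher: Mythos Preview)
Your proposal is correct and matches the paper's approach exactly: the paper simply states ``By Cauchy's inequality, we have an immediate corollary,'' and your argument fills in precisely those details by combining the first moment from Theorem~\ref{firstmoment} with the second moment bound of Theorem~\ref{secondmoment} via Cauchy--Schwarz. There is nothing to add.
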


		As far as we know, the only present result on non-vanishing of $L(k,f,\chi_d)$ in short intervals other than Corollary \ref{nonvanishing} is that of Balog-Ono\cite{balog2001chebotarev}. Their result is as follows. 
		\begin{thm}\cite{balog2001chebotarev}\label{ono1}
			 Let $f\in S_{2k}^{new}(\Gamma_0(N),\psi_{triv})$. If $f(z)$ is not a linear combination of weight-$3/2$ theta functions, then there exists a positive integer $k_f$ such that for $ X^{1-1/k_f+\epsilon}\leq h \leq X$,
			\begin{equation}
				\#\{d: d \text{ square-free}, X\leq |d|\leq X+h,L(k,f,\chi_d)\neq 0\}\gg \frac{h}{\log X}.\label{onoequation}
			\end{equation}
		\end{thm}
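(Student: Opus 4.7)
The approach is the one suggested by the title of the cited paper: combine Waldspurger's formula with a Chebotarev density theorem in short intervals.

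First, I would apply Waldspurger's theorem in its Kohnen--Zagier refinement. Because $f$ is a newform with trivial character that is not proportional to a weight-$3/2$ theta series, there exists a nonzero half-integer weight cusp form $g(z)=\sum_{n\ge 1}b(n)q^{n}\in S_{k+1/2}^{+}(\Gamma_{0}(4N))$ in the Kohnen plus-space such that, for every fundamental discriminant $d\in\mathcal{D}$, the values $L(k,f,\chi_{d})$ and $|b(|d|)|^{2}$ differ by an explicit nonzero factor depending only on $f$ and $|d|$. Hence $L(k,f,\chi_{d})\neq 0$ if and only if $b(|d|)\neq 0$, and it suffices to exhibit $\gg h/\log X$ squarefree $d\in\mathcal{D}\cap[X,X+h]$ with $b(|d|)\neq 0$. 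I would restrict attention to $d=\pm p$ with $p$ prime, which is already enough to deliver the claimed lower bound.

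Second, I would reduce the non-vanishing of $b(p)$ to a Chebotarev condition on $p$. One chooses a prime $\ell$ so that the residual Galois representation $\bar\rho_{f,\ell}$ attached to $f$ has image containing $\SL_{2}(\mathbb{F}_{\ell})$ -- by Serre--Deligne, the non-theta hypothesis ensures that almost all $\ell$ qualify -- and so that $g$ has an $\ell$-integral Fourier expansion nontrivial modulo $\ell$. An Ono--Skinner--style argument, combining a congruence for $g$ modulo $\ell$ with the Shimura correspondence (which ties Hecke eigenvalues of $g$ to those of $f$), then produces a nonempty union of conjugacy classes $\mathcal{C}\subseteq\Gal(K/\mathbb{Q})$ in the splitting field $K$ of $\bar\rho_{f,\ell}$ and a residue class $a\pmod{4N\ell}$ such that every prime $p\equiv a\pmod{4N\ell}$ with $\mathrm{Frob}_{p}\in\mathcal{C}$ satisfies $\pm p\in\mathcal{D}$ and $b(p)\not\equiv 0\pmod\ell$; in particular $b(p)\neq 0$.

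Third, I would invoke a short-interval Chebotarev density theorem for $K/\mathbb{Q}$: there exists a positive integer $k_{f}$, depending on $[K:\mathbb{Q}]$ and on the quality of available zero-density estimates, such that for $h\geq X^{1-1/k_{f}+\epsilon}$,
\[
\#\bigl\{p\in[X,X+h]:\,p\equiv a\pmod{4N\ell},\,\mathrm{Frob}_{p}\in\mathcal{C}\bigr\}\;\gg\;\frac{h}{\log X}.
\]
Each such $p$ gives a squarefree $d=\pm p\in\mathcal{D}\cap[X,X+h]$ with $L(k,f,\chi_{d})\neq 0$, establishing \eqref{onoequation}.

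The main obstacle is the third step. Effective Chebotarev of Lagarias--Odlyzko type only permits $h$ of size comparable to $X$; to shrink $h$ to $X^{1-1/k_{f}+\epsilon}$ one needs unconditional zero-density estimates for the Artin $L$-functions of $K$, that is, bounds on the number of zeros in rectangles close to $\mathrm{Re}(s)=1$. The attainable exponent, and hence $k_{f}$, is controlled by the strength of these density bounds and deteriorates as $[K:\mathbb{Q}]$ grows, which is the source of the rather large, form-dependent constant. Establishing these estimates uniformly enough to apply to modular Galois representations, while also verifying that the non-theta hypothesis keeps $\mathcal{C}\neq\emptyset$, is the technical heart of the Balog--Ono paper.
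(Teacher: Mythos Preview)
Your proposal is correct and matches the paper's account: the paper does not prove this theorem independently but cites it from Balog--Ono, noting only that it follows from their Corollary~4 (Theorem~\ref{ono2} here) on non-vanishing Fourier coefficients of half-integral weight forms together with Waldspurger's formula (Theorem~\ref{walds} here). Your Step~1 is exactly this reduction, and your Steps~2--3 correctly reconstruct how Balog--Ono establish Theorem~\ref{ono2} via an Ono--Skinner congruence argument feeding into a short-interval Chebotarev density theorem, which is precisely the content of the cited paper.
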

		
		Note that in (\ref{onoequation}), $d$ runs through not only $\mathcal{D}$ but also all square-free quadratic discriminants. Unlike Theorem \ref{ono1}, Corollary \ref{nonvanishing} gives a non-vanishing result for the eigenforms of non-trivial nebentypus. In case of trivial nebentype, although Corollary \ref{nonvanishing} gives a weaker result when $h\geq X^{1-1/k_f+\epsilon}$, it is worth noting that it gives further information on $N_f(X,h)$ when $X^{3/4+\epsilon}\leq h< X^{1-1/k_f+\epsilon}$. The advantage of having a non-vanishing result in a shorter inteval is more visible  in view of the non-vanishing of Fourier coefficients. 
		
		 Theorem \ref{ono1} is a consequence of the following corollary and Waldspurger's theorem.
		
		\begin{thm}\cite[Corollary 4]{balog2001chebotarev}\label{ono2}
			Let $g\in S_{k+1/2}(M, \psi)$, i.e., a cuspform of level $M$ and weight $k+1/2$ with the nebentype $\psi$. If $g(z)$ is not a linear combination of weight-$3/2$ theta functions, there exists  a positive integer $k_f$ such that for $ X^{1-1/k_f+\epsilon}\leq h \leq X$,
			\begin{equation*}
				\#\{ X\leq n\leq X+h:a_g(n)\neq 0 \}\gg \frac{h}{\log X}.
			\end{equation*}
		\end{thm}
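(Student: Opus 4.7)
The plan is to adapt the Serre-style Galois-theoretic analysis of Fourier coefficients of modular forms to the short-interval setting, replacing the classical Chebotarev density theorem with its short-interval refinement, which is the main technical result of \cite{balog2001chebotarev}.

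First I would apply Shimura's correspondence to the half-integral weight eigenform $g \in S_{k+1/2}(M,\psi)$, obtaining an integral-weight Hecke eigenform $F$ of weight $2k$ whose $T_p$-eigenvalue $a_F(p)$ equals the $T_{p^2}$-eigenvalue $\lambda_p$ of $g$ for primes $p\nmid M$. The hypothesis that $g$ is not a linear combination of weight-$3/2$ theta functions ensures that $F$ is a nonzero cusp form whose attached $\ell$-adic representation is irreducible and non-CM. I would then invoke the existing non-vanishing theorems (Ono-Skinner, Bruinier) to secure a square-free $n_0$ coprime to $M$ with $a_g(n_0)\neq 0$, and use the $T_{p^2}$-eigenrelation for half-integral weight forms,
\begin{equation*}
a_g(n_0 p^2) = \Bigl(a_F(p) - \psi^*(p)\Bigl(\tfrac{(-1)^k n_0}{p}\Bigr) p^{k-1}\Bigr)\, a_g(n_0),
\end{equation*}
valid for $p\nmid 2Mn_0$, to translate non-vanishing of $a_g$ at $n_0 p^2$ into non-vanishing of the bracketed Galois-theoretic factor attached to $F$ at $p$.

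Next I would fix an auxiliary prime $\ell$ and consider Deligne's mod-$\ell$ representation $\bar\rho_{F,\ell}\colon G_\Qb \to \GL_2(\overline{\fb}_\ell)$. The bracketed factor reduced mod $\ell$ is a class function of $\operatorname{Frob}_p$ in the Galois group of a fixed finite extension $K_F/\Qb$, namely the compositum of the fixed field of $\ker\bar\rho_{F,\ell}$ with a cyclotomic field large enough to resolve the Legendre symbol and $\psi^*$. Because $F$ is non-CM, for all but finitely many $\ell$ the image of $\bar\rho_{F,\ell}$ is sufficiently large that this class function is non-zero on a nonempty union of conjugacy classes in $\Gal(K_F/\Qb)$, cutting out a Chebotarev set $\Sigma_F$ of positive density $\delta>0$.

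The main obstacle is the short-interval count: for each fixed $n_0$, one must count primes $p \in \Sigma_F$ in the interval $[\sqrt{X/n_0},\,\sqrt{(X+h)/n_0}]$, whose length is only $\asymp h/\sqrt{n_0 X}$. This is precisely where the main theorem of \cite{balog2001chebotarev} enters: there exists an integer $k_F$, depending only on $K_F$, such that for any Chebotarev set $\Sigma$ of $K_F/\Qb$ and any $Y^{1-1/k_F+\epsilon} \leq h_Y \leq Y$, the number of primes of $\Sigma$ in $[Y,Y+h_Y]$ is $\gg_{\Sigma} h_Y/\log Y$. Applied to $\Sigma_F$ with $Y \asymp \sqrt{X/n_0}$, this yields $\gg h/(\sqrt{n_0 X}\log X)$ integers $n=n_0 p^2 \in [X,X+h]$ with $a_g(n)\neq 0$. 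Summing this contribution over the positive-density family of square-free $n_0$ with $a_g(n_0)\neq 0$ provided by Ono-Skinner — and noting that distinct $n_0$ produce distinct $n=n_0 p^2$ since $n_0$ is the square-free part of $n$ — recovers the claimed lower bound $\gg h/\log X$, with the exponent $1-1/k_f$ in the statement inherited from $k_F$.
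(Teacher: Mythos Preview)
This paper does not prove Theorem~\ref{ono2} at all; it is quoted from \cite[Corollary~4]{balog2001chebotarev} purely as background against which to compare the paper's own Corollary~\ref{halfcorollary}. The only argument carried out in the paper is the proof of Theorem~\ref{firstmoment} in Section~2, so there is no ``paper's own proof'' of this statement to set your proposal beside.

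That said, your sketch is in the spirit of the Balog--Ono method (Shimura lift plus their short-interval Chebotarev theorem), but several steps as written do not close. First, the statement is for an arbitrary cusp form $g\in S_{k+1/2}(M,\psi)$, whereas you silently assume $g$ is a Hecke eigenform in order to invoke the $T_{p^2}$-eigenrelation; a reduction step is missing. Second, the inference that the Shimura lift $F$ is non-CM merely because $g$ is not a combination of weight-$3/2$ theta functions is false: that hypothesis only excludes one-dimensional lifts in weight $3/2$, and in higher half-integral weight the lift can certainly be CM. Third, and most seriously, your final summation step loses a logarithm. Each fixed $n_0$ contributes only $\gg h/(\sqrt{n_0 X}\log X)$ integers $n=n_0 p^2$ in $[X,X+h]$; summing this over an Ono--Skinner family of square-free $n_0\le T$ of cardinality $\gg T/\log T$ gives, by partial summation, at best $\gg (h/\sqrt{X}\log X)\cdot(\sqrt{T}/\log T)\ll h/\log^2 X$, one logarithm short of the claim. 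Moreover the short-interval Chebotarev hypothesis forces $h/\sqrt{n_0X}\gg (X/n_0)^{(1-1/k_F)/2}$, which caps $T$ well below $X$ once $k_f$ is fixed, so the trade-off cannot be tuned to recover the missing factor within your framework.
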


		\begin{thm}\cite{pacetti2008shimura}\label{walds}
			Let $N$ be even, $f\in S_{2k}(N,\psi^2)$ and $g\in S_{k+1/2}(2N,\psi)$ where $f$ is the Shimura correspondent of $g$. Then 
			\begin{equation*}
				a_g(d)^2=\kappa_f L(k,f,\psi_0^{-1}\chi_d)\psi(d)d^{k},
			\end{equation*}
			where $\psi_0(n)=\psi(n)(\frac{-1}{n})^{k}$.
		\end{thm}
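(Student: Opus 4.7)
The plan is to derive this formula as an instance of Waldspurger's theorem on toric periods, with the local factors worked out explicitly for the level structure $g \in S_{k+1/2}(2N,\psi)$, $f \in S_{2k}(N,\psi^2)$ with $N$ even, following Pacetti--Tornaria. Waldspurger's theorem asserts that, for a half-integral weight eigenform $g$ in the generic Shimura-lift fiber over $f$, the squared $d$-th Fourier coefficient equals a central twisted $L$-value $L(k, f, \chi)$ times an Euler product of local zeta integrals; the content of the present statement lies in pinning down the twisting character as $\chi = \psi_0^{-1}\chi_d$ with $\psi_0(n) = \psi(n)\bigl(\tfrac{-1}{n}\bigr)^{k}$, and in evaluating the local integrals to give the clean normalisation $\kappa_f \, \psi(d)\, d^{k}$ with $\kappa_f$ independent of $d$.

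The steps I would carry out are: first, realise $g$ as a theta lift from a ternary quadratic form attached to an Eichler order of level $N$ in a definite quaternion algebra whose Jacquet--Langlands transfer is $f$; second, apply the seesaw identity for the dual pairs $(\widetilde{\mathrm{SL}}_2, \mathrm{O}(3))$ and $(\mathrm{SL}_2, \mathrm{PGL}_2)$, or equivalently the Rallis inner product formula, to express $|a_g(d)|^2$ as $L(k, f, \chi)$ times a product of local Bessel integrals indexed by $d$; third, compute these local integrals place by place, so that the archimedean factor produces $d^{k-1/2}$ together with the Gamma ratio used to normalise the central $L$-value, unramified primes contribute local $L$-ratios already absorbed into the global $L$-function, and primes dividing $2N$ contribute a quantity depending on $d$ only through $\psi(d)$; fourth, collect all $d$-independent contributions into $\kappa_f$ and convert the archimedean $d^{k-1/2}$ into $d^{k}$ via the functional-equation normalisation of the central value. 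The twist $\psi_0 = \psi \cdot \bigl(\tfrac{-1}{\cdot}\bigr)^{k}$ arises because the Kohnen-type support condition $(-1)^k d \equiv \square \pmod{4}$ selects coefficients associated to the discriminant $(-1)^k d$, so that the natural twisting character attached to the theta lift differs from $\chi_d$ by the sign twist $\bigl(\tfrac{-1}{\cdot}\bigr)^{k}$.

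The main obstacle is the explicit evaluation of the local Bessel integrals at primes dividing $2N$, particularly at $p=2$ where the metaplectic cover ramifies. One must identify the correct test vector in the Kirillov model of $g_p$, compute its pairing with the local Bessel functional indexed by $d$, and verify that the resulting integral depends on $d$ only through $\psi(d)$ with no residual discriminant factors. This is the technical heart of Pacetti--Tornaria and is what justifies a constant $\kappa_f$ uniform in $d$ in the final identity; the hypothesis that $N$ is even and $g$ sits at level exactly $2N$ is precisely what makes these local Bessel periods clean, while any relaxation of the level conditions would introduce additional local corrections that the present statement suppresses.
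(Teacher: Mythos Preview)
The paper does not give its own proof of this statement: Theorem~\ref{walds} is quoted verbatim from the literature (the citation \cite{pacetti2008shimura}) and used as a black box to pass between Corollary~\ref{nonvanishing} and Corollary~\ref{halfcorollary}. There is therefore no ``paper's own proof'' to compare your proposal against.

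Your outline is a reasonable high-level description of the representation-theoretic route to Waldspurger-type formulae (theta lift, seesaw, Rallis inner product, local Bessel computations), and you correctly identify the ramified local integrals as the crux. That said, as written it is a plan rather than a proof: none of the local computations are actually carried out, the precise normalisations needed to land on $\kappa_f\,\psi(d)\,d^{k}$ are asserted rather than derived, and the passage from $d^{k-1/2}$ to $d^{k}$ ``via the functional-equation normalisation'' is vague. If you were genuinely asked to supply a proof here, you would either need to execute those local calculations in full (essentially reproducing the cited paper) or, more appropriately for the present context, simply cite the result as the author does.
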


		Even further, Theorem \ref{ono2} answers to a stronger form of Serre's questions. Serre initiated the study on the sizes of gaps between non-zero Fourier coefficients. He defined the following gap function.
		\begin{align*}
			i_f(n)=\begin{cases}
				\max\{i:a_f(n+j)=0\text{ for all }0\leq j\leq i\} &\text{if }  a_f(n)=0,\\
				0&\text{otherwise}.
			\end{cases}
		\end{align*}
	
		Serre's question asks us to estimate this gap function with regards to the cusp forms of integral or half-integral weights. Combining Theorem \ref{walds} and Corollary \ref{nonvanishing}, our second corollary immediately follows.
		\begin{cor}\label{halfcorollary}
			Assume the conditions in Theorem \ref{walds} with $\psi=\psi_{triv}$. Then for $ X^{3/4+\epsilon}\leq h\leq X$,
			\begin{align*}
				\#\{X\leq n\leq X+h: a_g(n)\neq 0 \}\gg_{f,\epsilon} \frac{h^2}{X^{1+\epsilon}}.
			\end{align*}
			In particular, $i_g(n)\ll n^{3/4+\epsilon}$.
		\end{cor}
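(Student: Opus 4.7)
The plan is to bootstrap \corref{nonvanishing} through the Shimura--Waldspurger correspondence supplied by \thmref{walds}. With $\psi=\psi_{\mathrm{triv}}$ the auxiliary character simplifies to $\psi_0=(\tfrac{-1}{\cdot})^k$, and folding $\psi_0^{-1}$ into $\chi_n$ converts \thmref{walds} into the clean identity
\begin{equation*}
a_g(n)^2 \;=\; \kappa_f\, L\!\left(k,f,\chi_{(-1)^k n}\right) n^k
\end{equation*}
for every square-free positive integer $n$. Since $\kappa_f\neq 0$, the non-vanishing of $a_g(n)$ is equivalent to that of $L(k,f,\chi_d)$ with $d=(-1)^k n$, and this $d$ automatically satisfies $(-1)^k d>0$, so the sign condition built into $\mathcal D$ is free.

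To deduce the first inequality of the corollary, I would restrict to those square-free $n\in[X,X+h]$ for which $d=(-1)^k n$ additionally obeys $d\equiv v^2\pmod{4N}$ for some $(v,4N)=1$, that is, $d\in\mathcal D$. The map $d\mapsto n=|d|$ is a bijection from the square-free elements of $\mathcal D$ with $X\leq|d|\leq X+h$ onto a subset of $\{X\leq n\leq X+h\}$. \corref{nonvanishing} supplies $\gg h^2/X^{1+\epsilon}$ such $d$ with $L(k,f,\chi_d)\neq 0$, and each corresponding $n$ then satisfies $a_g(n)\neq 0$ by the displayed identity, which yields the claimed lower bound.

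For the gap statement, I would specialise to $X=n$ and $h=n^{3/4+\epsilon}$. The estimate just proved guarantees at least $\gg n^{2(3/4+\epsilon)-1-\epsilon}=n^{1/2+\epsilon}$ integers $m\in[n,n+n^{3/4+\epsilon}]$ with $a_g(m)\neq 0$; this is $\geq 1$ for all sufficiently large $n$, whence $i_g(n)\leq n^{3/4+\epsilon}$, and the implied constant can be enlarged to absorb the finitely many small $n$.

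I do not anticipate any serious obstacle, since every analytic ingredient is already in place. The one point requiring some care is verifying that the twist $\psi_0^{-1}\chi_n$ appearing in Waldspurger's formula coincides with the Kronecker character $\chi_{(-1)^k n}$, and that the square-free $d\in\mathcal D$ are genuine fundamental discriminants so that the formula applies without any correction. This is immediate from the definition of $\psi_0$ and from the observation that $v$ coprime to $4N$ is necessarily odd, which forces $d\equiv 1\pmod 8$ whenever $d$ is also square-free.
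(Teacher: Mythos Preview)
Your proposal is correct and takes essentially the same approach as the paper, which simply asserts that the corollary follows immediately from combining \thmref{walds} with \corref{nonvanishing}. You have spelled out the details of that combination carefully, including the identification $\psi_0^{-1}\chi_n=\chi_{(-1)^k n}$ and the check (using that $N$ is even) that the square-free $d\in\mathcal D$ are genuine fundamental discriminants.
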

		
		Corollary \ref{halfcorollary} along with Theorem \ref{ono2} are the only results on $i_g(n)$ where $g$ is of half-integral weight.  On the other hand, many mathematicians have studied $i_f(n)$ where $f$ is of integral weight. Unfortunately, most of the approaches in the study of integral weight case are not available in half-integral weight case. Let us briefly review the works on $i_f(n)$.  
		
		Rankin-Selberg estimates, the multiplicativity of Hecke operators, or the arithmetic of Galois representations have been useful tools to study $i_f(n)$ for the eigenforms of integral weight. For example, we have a classical result on $L(f\otimes \overline{f})$ such that there exists an integer $c_f$ for which
		\begin{align*}
			\sum_{n\leq X}|a_f(n)|^2n^{1-\frac{k}{2}}= c_f X+O(X^{\frac{3}{5}}).
		\end{align*}
		It immediately follows that $i_f(n)\ll n^{3/5}$. Another approach appeals to the theory of $\mathfrak{B}$-free numbers, especially when $f$ is not of CM type.  Let $\mathfrak{B}=\{b_i\}$ be a set of integers such that 
		\begin{align*}
			\sum_{b\in \mathfrak{B}}\frac{1}{b}<\infty\text{ and }(b_i,b_j)=1 \text{ whenever }i\neq j.
		\end{align*}
		We say that a natural number $n$ is $\mathfrak{B}$-free if it is not divisible by any of the elements of $\mathfrak{B}$. Specifically, one can define $\mathfrak{B}$ as follows. 
		\begin{align*}
			\mathfrak{B}=\{p \text{ prime }:a_f(p)=0\}\cup \{p |N \text{ prime}\}.
		\end{align*}
		$\mathfrak{B}$ has a zero density due to the following result of Serre\cite[p.174, Cor.2]{serre1981quelques}. Let $f$ be a newform with integral weight $2k\geq 2$ which is not of CM type. Then
		\begin{align}
			\#\{p\leq X \text{ prime }:a_f(p)=0\}\ll_{f,\epsilon}\frac{X}{(\log X)^{3/2-\epsilon}}.\label{serredensity}
		\end{align}
		
		In view of (\ref{serredensity}) and the multiplicativity of Hecke eigenvalues, $a_f(n)\neq 0$ if $n$ is square-free and $\mathfrak{B}$-free. Thus estimating $i_f(n)$ becomes a problem of counting $\mathfrak{B}$-free numbers in the short intervals. Balog-Ono were the first to take this approach and they deduced that 
		\begin{align*}
			i_f(n)\ll n^{17/41+\epsilon}.
		\end{align*} Later, the estimates of $i_f(n)$ has been refined several times. The best bound for $i_f(n)$ is due to Kowalski-Robert-Wu\cite{kowalski2007small}. They proved that for any holomorphic non-CM cuspidal eigenform $f$ on general
		congruence groups,
		\begin{align*}
			i_f(n)\ll n^{7/17+\epsilon}.
		\end{align*}
		
		If $f$ is of CM type, there are no similar general results on $i_f(n)$. The major difficulty in this case is that  the density estimate in (\ref{serredensity}) is valid only for non-CM type forms. Thus the previous works on CM type forms took different approaches other than distribution of $\mathfrak{B}$-free numbers. For example, Das-Ganguly\cite{das2014gaps} showed that for all nonzero cuspforms  $f$  of level one, 
		\begin{align}
			i_f(n)\ll n^{1/4}. \label{withcmlevelone}
		\end{align}
		The main ingredient of their work were the congruence relation of Hecke eigenvalues due to Hatada\cite{hatada1979eigenvalues} and the distribution of sum of two squares in short intervals. 
		
		(\ref{withcmlevelone}) can be extended to the eigenforms of higher levels, under some conditions on $f$. Let $E/\mathbb{Q}$ be an elliptic curve which has a cyclic rational $4$-isogeny and $f_E$ be a newform corresponding to $E$ by the modularity theorem. Kumar\cite{kumar2018gaps} first proved that $f_E$ satisfies (\ref{withcmlevelone}), by showing that there exists a positive integer $m$ such that $a_{f_E}(m)\neq 0$ and $m$ is a sum of two squares in intervals $(X,X+cX^{\frac{1}{4}})$.   From this, he deduced that if $f$ is $2$-adically close enough to $f_E$, $f$ also satisfies (\ref{withcmlevelone}).

		Let us recall some necessary facts for the future discussion. Let $e(z)=e^{2\pi i z}$ and
		\begin{align*}
			f(z)=\sum_{n\geq 1}a_ne(nz)
		\end{align*}
		be the Fourier expansion of $f$ at the cusp $\infty$.  The associated modular $L-$function $L(s,f)=\sum_{n\geq 1}a_nn^{-s}$  absolutely converges in $\Re(s)>k+1/2$. It can be analytically continued to an entire function and satisfies the following functional equation
		\begin{equation*}
			\bigg(\frac{\sqrt{N}}{2\pi}\bigg)^s\Gamma(s)L(s,f)
			=w\bigg(\frac{\sqrt{N}}{2\pi}\bigg)^{2k-s}\Gamma(2k-s)L(2k-s,f),
		\end{equation*}
		where  $w=\pm 1$. In addition, the twisted $L-$function $L(s,f,\chi_d)$ satisfies the functional equation
		\begin{equation*}
			\bigg(\frac{d\sqrt{N}}{2\pi}\bigg)^s\Gamma(s)L(s,f,\chi_d)
			=w_d\bigg(\frac{d\sqrt{N}}{2\pi}\bigg)^{2k-s}\Gamma(2k-s)L(2k-s,f,\chi_d),
		\end{equation*}
		where $w_d=w\chi_d(-N)=1$ for all $d\in \mathcal{D}$.
	 	
	 	If $f$ is an eigenform, we have the Euler product
	 	\begin{align}
	 		L(s,f)=\prod_p\bigg(1-\frac{\alpha_p}{p^s}\bigg)^{-1}\bigg(1-\frac{\beta_p}{p^s}\bigg)^{-1}\label{eulerproduct}
	 	\end{align}
 		and it satisfies the Ramanujan-Petersson conjecture (i.e. $|\alpha_p|=|\beta_p|=p^{k-1/2}$ for all $p\nmid N$ and $|\alpha_p|,|\beta_p|\leq p^{k-1/2}$ otherwise). Given $\alpha_p, \beta_p$ for all $p$, we define $n\mapsto \alpha_n,n\mapsto \beta_n$ as totally multiplicative functions on $\mathbb{N}$.
	\section{Proof of Theorem \ref{firstmoment}}

		We modify the methods of \cite{iwaniec1990order} and \cite{luo1997determination}. Necessary changes will be described in detail. 

	Set 
	\begin{align*}
		V(x)&:=\frac{1}{2\pi i}\int_{(4/5)}\frac{\Gamma(k+s)}{\Gamma(k)}x^{-s}\frac{ds}{s}.
	\end{align*}
	Here, note that the integral $\int_{(4/5)}$ denotes $\int_{4/5-i\infty}^{4/5+i\infty}$. By the Mellin transform and the integration by parts, we have
	\begin{align*}
		V(x)=\frac{1}{\Gamma(k)}\int_x^\infty e^{-y} y^{k-1} dy
		=(1+x+\dots+\frac{x^{k-1}}{(k-1)!})e^{-x}.
	\end{align*}
	
	Next, we will define $L(k,f,\chi_d)$ in terms of the rapidly convergent sums. Let
	\begin{align*}A(Q, \chi_d)&=\frac{1}{2\pi i}\int_{(4/5)}L(f, \chi_d, k+s)\frac{\Gamma(k+s)}{\Gamma(k)}\bigg(\frac{2\pi}{Q}\bigg)^{-s}\frac{ds}{s}.
	\end{align*}
	We have
	\begin{align*}A(Q, \chi_d)=\sum_{n\geq 1}a_n n^{-k} \chi_d(n) V\bigg(\frac{2\pi n}{Q}\bigg).
	\end{align*}
	By shifting the contour of integration to $\Re(s)=-4/5$, we obtain
	\begin{equation}
		L(k, f, \chi_d)=A(Q, \chi_d)+A(d^2NQ^{-1}, \chi_d),\label{L is sum of two terms}
	\end{equation} for any $Q>0$ and square-free $d\in D$. 
	In particular, 
	\begin{equation}\label{aval}
		L(k, f, \chi_d)=2A(|d|\sqrt{N}, \chi_d).
	\end{equation}

	By Abel's summation formula,
	\begin{equation}\label{abel}
		A(Q, \chi_d) \ll _fQ^{\frac{1}{2}}.
	\end{equation}
	Combining (\ref{L is sum of two terms}) and (\ref{abel}) gives 
	\begin{equation}
		L(k, f,\chi_d)=A(Q,\chi_d)+O_f(|d|Q^{-\frac{1}{2}}) \text{ for all } Q>0.\label{L is sum of A and O}
	\end{equation}

	As in \cite{luo1997determination}, we have an upper bound of the fourth moment of $L$-values 
	
	\begin{equation*}
		\sideset{}{'}\sum_{d\in D, |d|\leq X+h}|L(k,f,\chi_d)|^4  \ll (X+h)^{2+\epsilon}.
	\end{equation*}
	The first moment we are considering is
	\begin{equation*}
		S_f(X,h):=\sideset{}{'}\sum_{\substack{d\in D\\X\leq |d|\leq X+h}} L(k,f,\chi_d)
		=2\sideset{}{'}\sum_{\substack{d\in D\\X\leq |d|\leq X+h}} A(|d|\sqrt{N},\chi_d).
	\end{equation*}
	We introduce the M\"obius function to relax the square-free condition, so that 
	\begin{align*}
		S_f(X,h)&=2\sum_{\substack{d\in \mathcal{D}\\X/a^2\leq |d|\leq (X+h)/a^2}} \sum_{a^2|d}\mu(a)A(|d|\sqrt{N},\chi_d)\\
		&=2\sum_{(a,4N)=1}\sum_{\substack{d\in \mathcal{D}\\X/a^2\leq |d|\leq (X+h)/a^2}} \mu(a)A(a^2|d|\sqrt{N},\chi_{a^2d}).
	\end{align*}
	Here, $A$ is a large number only dependent on $X$. We choose $A$ later. We split the sum into two parts, say, $S_f(X,h)=S+R$, where
	
	\begin{align*}
		S&=2\sum_{\substack{a\leq A \\(a,4N)=1}}\mu(a)\sum_{\substack{d\in \mathcal{D}\\X/a^2\leq |d|\leq (X+h)/a^2}} A(a^2|d|\sqrt{N},\chi_{a^2d})					
	\end{align*}	
	and
	
	\begin{align*}
		R
		&=2\sum_{\substack{a> A \\(a,4N)=1}}\mu(a)\sum_{\substack{d\in \mathcal{D}\\X/a^2\leq |d|\leq (X+h)/a^2}} A(a^2|d|\sqrt{N},\chi_{a^2d})	\\
		&=2\sum_{(b,4N)=1}\bigg(\sum_{\substack{a|b\\a>A}}\mu(a)\bigg)\sideset{}{'}\sum_{\substack{d\in \mathcal{D}\\X/b^2\leq |d|\leq (X+h)/b^2}} A(b^2|d|\sqrt{N},\chi_{b^2d})	.				
	\end{align*}
	
	Now we first estimate the partial sum $R$. 
	
	\begin{prop}\label{R}
		$R  \ll _{f,\epsilon}A^{-1-\epsilon}(X+h)^{\epsilon+\frac{1}{2}}h^{\frac{1}{2}}+ (X+h)^{\frac{1}{2}}hA^{-3+\epsilon}$ 
	\end{prop}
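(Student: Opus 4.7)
The plan is to bound each inner $d$-sum via Cauchy--Schwarz after decomposing $A(b^2|d|\sqrt{N},\chi_{b^2d})$ into an $L$-value plus a ``dual tail'' by a functional-equation-type identity in the spirit of (\ref{L is sum of two terms}), and then to control the two pieces by Theorem \ref{secondmoment} and the pointwise bound (\ref{abel}) respectively. The two terms in the target estimate correspond to these two contributions.

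First, some preliminary reductions. The outer coefficient $\sum_{a\mid b,\,a>A}\mu(a)$ vanishes for $b\le A$ and is bounded by $\tau(b)\ll_\epsilon b^\epsilon$; the requirement $|d|\ge 1$ forces $b\le\sqrt{X+h}$; and the inner sum contains $O(h/b^2)$ square-free $d\in\mathcal{D}$. Hence only $A<b\le\sqrt{X+h}$ contributes. Next, because $\chi_{b^2d}(n)=\chi_d(n)$ when $(n,b)=1$ and is zero otherwise, $L(s,f,\chi_{b^2d})$ differs from $L(s,f,\chi_d)$ only by the removal of the Euler factors at primes dividing $b$; in particular, for square-free $d\in\mathcal{D}$ the two $L$-functions share the analytic conductor $d^2N$ and the root number $w_d=1$. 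Shifting the contour in the defining integral of $A(Q,\chi_{b^2d})$ from $\Re(s)=4/5$ to $\Re(s)=-4/5$ therefore yields an identity of shape
\begin{align*}
A(b^2|d|\sqrt{N},\chi_{b^2d}) = L(k,f,\chi_{b^2d}) - A^\sharp(|d|\sqrt{N}/b^2,\chi_{b^2d}),
\end{align*}
where $A^\sharp$ is a short-sum integral analogous to $A$, satisfying $|A^\sharp(Q,\chi_{b^2d})|\ll Q^{1/2}b^\epsilon$ by Ramanujan--Petersson applied to its (finitely many extra) Euler factors.

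Let $T_b$ denote the inner $d$-sum of $R$. Cauchy--Schwarz combined with the above identity gives
\begin{align*}
|T_b|^2 \ll \frac{h}{b^2}\sideset{}{'}\sum_{X/b^2\le|d|\le(X+h)/b^2}\Big(|L(k,f,\chi_{b^2d})|^2 + |A^\sharp(|d|\sqrt{N}/b^2,\chi_{b^2d})|^2\Big).
\end{align*}
Using the Euler-factor comparison $|L(k,f,\chi_{b^2d})|\ll b^\epsilon|L(k,f,\chi_d)|$, Theorem \ref{secondmoment} bounds the first mean-square by $\ll_\epsilon(X+h)^{1+\epsilon}/b^{2+\epsilon}$. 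For the second mean-square, $|A^\sharp|^2\ll(X+h)/b^{4-\epsilon}$ pointwise, which summed over the $O(h/b^2)$ terms yields $\ll(X+h)h/b^{6-\epsilon}$. Hence $|T_b|\ll h^{1/2}(X+h)^{1/2+\epsilon}/b^2 + h(X+h)^{1/2}/b^{4-\epsilon}$. Weighting by $\tau(b)\ll b^\epsilon$ and summing over $A<b\le\sqrt{X+h}$ then produces the two claimed terms, $A^{-1-\epsilon}(X+h)^{1/2+\epsilon}h^{1/2}$ and $(X+h)^{1/2}hA^{-3+\epsilon}$.

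The main obstacle is establishing the functional-equation identity cleanly for the imprimitive twist $L(s,f,\chi_{b^2d})$, i.e.\ verifying that the extra local factors at primes dividing $b$ do not spoil the pointwise bound on the dual tail $A^\sharp$, and that the root number $w_d=1$ survives the passage from $\chi_d$ to $\chi_{b^2d}$. Once this identity is in hand, the Cauchy--Schwarz mean-square bookkeeping and the geometric-series summation over $b$ are routine.
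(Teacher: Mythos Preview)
Your approach is correct and lands on exactly the same pointwise inequality the paper uses, namely
\[
A(b^2|d|\sqrt{N},\chi_{b^2d}) \ll_f b^\epsilon|L(k,f,\chi_d)| + b^{-1+\epsilon}|d|^{1/2},
\]
after which Cauchy--Schwarz together with Theorem~\ref{secondmoment} and the trivial count of $d$'s give the two claimed terms. So the overall architecture is the same.

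The one point of difference is how this pointwise bound is obtained. You reach it by shifting the contour for $A(Q,\chi_{b^2d})$, picking up the residue $L(k,f,\chi_{b^2d})$, and then handling the dual integral via the functional equation of the \emph{primitive} twist $L(s,f,\chi_d)$; you correctly flag the extra Euler polynomial $P_b(k-s)=\prod_{p\mid b}(1-\alpha_p\chi_d(p)p^{-(k-s)})(1-\beta_p\chi_d(p)p^{-(k-s)})$ as the thing to control. The paper (following Iwaniec) sidesteps this entirely: it never invokes a functional equation for the imprimitive twist, but instead uses the M\"obius/Euler-factor identity
\[
A(Q,\chi_{b^2d})=\sum_{d_1\mid b}\sum_{d_2\mid b}\mu(d_1)\mu(d_2)\frac{\alpha_{d_1}\beta_{d_2}}{(d_1d_2)^k}\chi_d(d_1d_2)\,A\!\Big(\frac{Q}{d_1d_2},\chi_d\Big),
\]
and then applies the already-established approximate functional equation (\ref{L is sum of A and O}) to each primitive $A(Q/d_1d_2,\chi_d)$. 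If you expand your $P_b(k-s)$ as a Dirichlet polynomial you recover exactly this double sum, so the two routes are genuinely equivalent; the paper's presentation simply makes your ``main obstacle'' disappear, since no imprimitive functional equation is ever needed. A minor bookkeeping note: the paper sums with the exact weight $\sum_{a\mid b,\,a>A}1$ and swaps the $a,b$ sums, which gives the slightly sharper exponent $A^{-1-\epsilon}$ rather than the $A^{-1+\epsilon}$ your $\tau(b)\ll b^\epsilon$ bound produces, but this is immaterial for the application.
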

	\begin{proof}
		From (\ref{eulerproduct}), (\ref{L is sum of A and O}) and by introducing the factors $\sum_{d_1|b}\mu(d_1),\sum_{d_2|b}\mu(d_2)$, as in \cite{iwaniec1990order} we have
	\begin{align*}
		A(Q,\chi_{b^2d})&=\sum_{\substack{d_1|b}}
		\sum_{\substack{d_2|b}} 
		\mu(d_1)\mu(d_2)
		\frac{\alpha_{d_1}\beta_{d_2}}{(d_1d_2)^{k}}
		\chi_d(d_1d_2)
		A(\frac{Q}{d_1d_2},\chi_d)\\
		&=\sum_{\substack{d_1|b}}
		\sum_{\substack{d_2|b}} 
		\mu(d_1)\mu(d_2)
		\frac{\alpha_{d_1}\beta_{d_2}}{(d_1d_2)^{k}}
		\chi_d(d_1d_2)
		\bigg(
		L(k,f,\chi_d)+O((d_1d_2)^{\frac{1}{2}}|d|Q^{\frac{-1}{2}})
		\bigg).
	\end{align*}
	We split the sum into $A,B$, where 
	\begin{align*}
		A=\sum_{\substack{d_1|b}}
		\sum_{\substack{d_2|b}} 
		\mu(d_1)\mu(d_2)
		\frac{\alpha_{d_1}\beta_{d_2}}{(d_1d_2)^{k}}
		\chi_d(d_1d_2)
		L(k,f,\chi_d)
	\end{align*} 
	and
	\begin{align*}
		B=\sum_{\substack{d_1|b}}
		\sum_{\substack{d_2|b}} 
		\mu(d_1)\mu(d_2)
		\frac{\alpha_{d_1}\beta_{d_2}}{(d_1d_2)^{k}}
		\chi_d(d_1d_2)
		O\bigg((d_1d_2)^{\frac{1}{2}}|d|Q^{-\frac{1}{2}}\bigg).
	\end{align*}
	 From Deligne's bound $\alpha_n,\beta_n\leq d(n)n^{(2k-1)/2}$, we have 
	\begin{align*}
		A\leq |L(k,f,\chi_d)|\sum_{\substack{d_1|b}}
		\sum_{\substack{d_2|b}} 
		(d_1d_2)^{-\frac{1}{2}+\epsilon} \ll |L(k,f,\chi_d)|
	\end{align*}
	and
	\begin{align*}
		B=O\bigg( |d|Q^{\frac{-1}{2}}\sum_{\substack{d_1|b}}
		\sum_{\substack{d_2|b}} 
		(d_1d_2)^{\frac{\epsilon}{2}}\bigg)
		=O\bigg(  b^\epsilon|d| Q^{\frac{-1}{2}}\bigg).
	\end{align*}
	Thus 
	\begin{equation*}
		A(Q,\chi_{b^2d})  \ll _f|L(k,f,\chi_d)|+ b^\epsilon|  d| Q^{-\frac{1}{2}}.
	\end{equation*}
	Collecting $A$ and $B$ together, and with $Q=b^2|d|\sqrt{N}$, $R$ has a bound
	\begin{align*}
		R&  \ll _f\sum_{(b,4N)=1}
		\bigg(\sum_{\substack{a|b\\a>A}}1\bigg)
		\sideset{}{'}\sum_{\substack{d\in \mathcal{D}\\X/b^2\leq |d|\leq (X+h)/b^2}} 
		\bigg(|L(k,f,\chi_d)|+ b^{-1+\epsilon}|  d|^{\frac{1}{2}}  \bigg).\\
	\end{align*}
	By the H\"older inequality and Theorem \ref{secondmoment}, 
	\begin{align*}
		&	\sideset{}{'}\sum_{\substack{d\in \mathcal{D}\\ X/b^2\leq |d|\leq (X+h)/b^2}} 
		|L(k,f,\chi_d)|\\
		&  \ll \bigg(\sideset{}{'}\sum_{\substack{d\in \mathcal{D}\\X/b^2\leq |d|\leq (X+h)/b^2}} |L(k,f,\chi_d)|^2\bigg)^\frac{1}{2}
		\bigg(\sum_{\substack{X/b^2\leq |d|\leq (X+h)/b^2}} 1\bigg)^\frac{1}{2}\\
		&  \ll _\epsilon((X+h)b^{-2})^\frac{1+2\epsilon}{2}h^{\frac{1}{2}}b^{-1}
		=b^{-2-2\epsilon}(X+h)^{\epsilon+\frac{1}{2}}h^{\frac{1}{2}}
	\end{align*}
	and 
	\begin{align*}
		&b^{-1+\epsilon}\sideset{}{'}\sum_{\substack{d\in \mathcal{D}\\ X/b^2\leq |d|\leq (X+h)/b^2}}|d|^\frac{1}{2}
		  \ll b^{-1+\epsilon}\frac{(X+h)^\frac{1}{2}}{b}\frac{h}{b^2}
		=(X+h)^\frac{1}{2}hb^{-4+\epsilon}.
	\end{align*}

	In sum,  
	\begin{align*}
		R&  \ll _f\sum_{(b,4N)=1}
		\bigg(\sum_{\substack{a|b\\a>A}}1\bigg)
		\bigg(b^{-2-2\epsilon}(X+h)^{\epsilon+\frac{1}{2}}h^{\frac{1}{2}}+ (X+h)^\frac{1}{2}hb^{-4+\epsilon}\bigg)\\
		&  \ll A^{-1-\epsilon}(X+h)^{\epsilon+\frac{1}{2}}h^{\frac{1}{2}}+ (X+h)^{\frac{1}{2}}hA^{-3+\epsilon}.
	\end{align*}	
\end{proof}

	We now evaluate $S$. For $(a, 4N)=1$ and $d\in \mathcal{D}$, we have
	\begin{equation*}
		A(a^2|d|\sqrt{N},\chi_{a^2d})=\sum_{(n,a)=1}a_nn^{-k}\chi_d(n)V(\frac{2\pi n}{a^2|d|\sqrt{N}}).
	\end{equation*}
	Write $n=rj^2m$, where $r|(4N)^\infty, (jm,4N)=1$, and $m$ is square-free.
	From
	\begin{equation*}
		\chi_d(n)=\chi_d(m) \text{ for } (d,j)=1,
	\end{equation*}
	we obtain
	\begin{align*}
		S&=2\sum_{\substack{a\leq A \\(a,4N)=1}}\mu(a)
		\sum_{\substack{n=rj^2m\\(n,a)=1}}a_nn^{-k}
		\sum_{\substack{(d,j)=1\\d\in \mathcal{D}\\X/a^2\leq |d|\leq (X+h)/a^2}} \chi_d(m)V(\frac{2\pi n}{a^2|d|\sqrt{N}})\\
		&=2\sum_{\substack{a\leq A \\(a,4N)=1}}\mu(a)
		\sum_{\substack{n=rj^2m\\(n,a)=1}}a_nn^{-k}
		\sum_{q|j}\mu(q)
		\sum_{\substack{dq\in \mathcal{D}\\X/a^2\leq |d|q\leq (X+h)/a^2}} \chi_{dq}(m)V(\frac{2\pi n}{a^2|d|q\sqrt{N}}).\\			
	\end{align*}
	In the second inequality, we introduced a M\"obius factor to relax the coprimality condition on $d$. 
	
	By the Gauss inversion formula,

	\begin{align*}
		S&=2\sum_{\substack{a\leq A \\(a,4N)=1}}\mu(a)
		\sum_{\substack{n=rj^2m\\(n,a)=1}}a_nn^{-k}
		\overline{\epsilon_{m}}{m}^{\frac{-1}{2}}
		\sum_{q|j}\mu(q)\\&\times
		\sum_{\substack{dq\in \mathcal{D}\\X/a^2\leq |d|q\leq (X+h)/a^2}} 
		\sum_{2|b|<{m}}\chi_{Nbq}({m})e\bigg(\frac{\overline{4N}bd}{m}\bigg)
		V\bigg(\frac{2\pi n}{a^2|d|q\sqrt{N}}\bigg),\\
	\end{align*}
	where
	\begin{align*}
		\overline{\epsilon_m}=
		\begin{cases}
			i &\text{ if } m\equiv -1\text{(mod 4)}\\
			1 &\text{ if } m\equiv 1\text{(mod 4)}.
		\end{cases}
	\end{align*}

	Set $\Delta=min(\frac{1}{2},a^2q(X+h)^{\epsilon-1})$. We split the sum $S$ into
	
	\begin{equation*}
		S=S_0+S_1+S_2,
	\end{equation*}
	where the three partial sums are restricted by the conditions $b=0, 0<|b|<\Delta m, \Delta m\leq |b|<m/2$, respectively.
	Evaluating $|S_1|$ and $|S_2|$ only requires minor modifications of  Iwaniec's method.
	
	First of all, note that as in \cite{iwaniec1990order},	$S_2 \ll _{f}1$.

	\begin{prop}\label{S_1}
			$S_1  \ll _fA^2(X+h)^{\epsilon-\frac{1}{2}}h$
	\end{prop}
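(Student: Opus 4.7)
The approach follows the framework of Iwaniec \cite{iwaniec1990order} and Luo \cite{luo1997determination}, adapted to the short-interval setting. The decisive feature of the range $0<|b|<\Delta m$, with $\Delta=a^{2}q(X+h)^{\epsilon-1}$, is that the additive character $e(\overline{4N}bd/m)$ rotates by at most $(X+h)^{\epsilon}$ over the full $d$-interval of length $h/(a^{2}q)$; explicitly $|b/m|\cdot h/(a^{2}q)\leq (X+h)^{\epsilon}$. Consequently neither Poisson summation nor partial summation in the $d$-variable yields any genuine cancellation beyond logarithmic factors, and the argument must rely on trivial bounds on the $b$- and $d$-sums tempered by the Gauss-sum weight $m^{-1/2}$.

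Concretely, I would first bound $\bigl|\sum_{d}\bigr|\ll h/(a^{2}q)$ and $\sum_{0<|b|<\Delta m}|\chi_{Nbq}(m)|\ll \Delta m$, and couple these with Deligne's estimate $|a_{n}|n^{-k}\ll d(n)n^{-1/2}$ together with the factorization $n=rj^{2}m$, yielding a per-$(a,q,r,j,m)$ contribution of size $d(n)\, h(X+h)^{\epsilon-1}/(r^{1/2}j)$ after the $\Delta$ and $a^{2}q$ cancel. The rapid decay of $V$ truncates the $n$-sum at $n\ll (X+h)\sqrt{N}$ up to logarithmic losses, so $m\leq M:=(X+h)/(rj^{2})$. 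The resulting sums over $m$, $q\mid j$, $j\ge 1$, and $r\mid(4N)^{\infty}$ are all absolutely convergent after absorbing the divisor factors $d(rj^{2}m)$ into an $(X+h)^{\epsilon}$.

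The main obstacle will be extracting the sharp power $(X+h)^{-1/2}$ in place of the $(X+h)^{0}$ that the purely trivial argument would give. This saving must be forced out of the Gauss-sum normalisation $m^{-1/2}$: rather than executing the $m$-sum against the weight $m^{-1}$ (which arises when one na\"ively combines $n^{-1/2}$ with $m^{-1/2}$ and uses $n=rj^{2}m$), I would exchange the orders of summation and carry out the $m$-sum against $m^{-1/2}$ alone. The partial sum $\sum_{m\leq M}m^{-1/2}\ll M^{1/2}$ replaces the trivial $M$ and converts one factor of $(X+h)$ into $(X+h)^{1/2}$. The remaining factor $A^{2}$ in the statement (rather than $A$) comes from the $a^{2}$ sitting in $\Delta=a^{2}q(X+h)^{\epsilon-1}$, which fails to cancel against the $a^{-2}$ from the length of the $d$-range once one tracks the dependence on $a$ through the interchanged $m$-sum. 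Summing over $a\leq A$ then produces the claimed $A^{2}h(X+h)^{\epsilon-1/2}$.
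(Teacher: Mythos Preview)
Your diagnosis of the difficulty is accurate: a purely trivial treatment of the $b$- and $d$-sums together with Deligne's bound gives only $(X+h)^{\epsilon}$ for the inner $(b,m)$-sum, too weak by a factor $(X+h)^{1/2}$. The gap is in your proposed remedy. You cannot ``carry out the $m$-sum against $m^{-1/2}$ alone'': the weight $a_{n}n^{-k}$ with $n=rj^{2}m$ contributes a further $m^{-1/2+\epsilon}$ after Deligne, and the $b$-count $\#\{0<|b|<\Delta m\}\ll\Delta m$ contributes a factor $m$. These are inseparable from the $m$-variable; the net per-$m$ weight is $\Delta\, m^{\epsilon}$, and summing to $M\asymp(X+h)/(rj^{2})$ returns exactly $\Delta M^{1+\epsilon}\asymp a^{2}q\,r^{-1}j^{-2}(X+h)^{\epsilon}$, the bound you already flagged as insufficient. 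Reordering the trivial sums does not change this arithmetic, and if instead you waste Deligne by bounding $|a_{n}|n^{-k}\le 1$ so that only $m^{-1/2}$ remains, the outcome $\Delta M^{3/2}$ is worse still.

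The paper, citing section~8 of \cite{iwaniec1990order}, gets the missing $(X+h)^{-1/2}$ from genuine \emph{oscillation} in the $m$-sum for each fixed $b\neq 0$. The combination $\overline{\epsilon_{m}}\,m^{-1/2}\chi_{Nbq}(m)e(\overline{4N}bd/m)$ is precisely a Gauss-sum kernel, and Iwaniec's estimate exploits this structure together with the modular $L$-series of $f$ to obtain square-root cancellation in $m$; the quoted bound is $\sum_{b}\sum_{m}\ll a^{3}q^{2}r^{-3/2}j^{-3}(X+h)^{\epsilon-1/2}$, after which the sums over $d$ (length $h/(a^{2}q)$), $q\mid j$, $j$, $r$ and $a\le A$ are routine. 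In particular the factor $A^{2}$ arises because this non-trivial inner bound carries $a^{3}$, against $a^{-2}$ from the $d$-range, leaving $\sum_{a\le A}a\asymp A^{2}$; it is a by-product of the cancellation, not of a failed cancellation as your last paragraph suggests. To close the argument you must invoke (or reprove) Iwaniec's estimate for the $m$-sum; size considerations alone do not suffice.
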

\begin{proof}

		We have  
	\begin{align*}
		S_1=&2\sum_{\substack{a\leq A \\(a,4N)=1}}\mu(a)
		\sum_{\substack{rj^2\\(rj,a)=1}}
		\sum_{q|j}\mu(q)
		\sum_{\substack{dq\in \mathcal{D}\\X/a^2\leq |d|q\leq (X+h)/a^2}} \\&\times
		\sum_{0<|b|}\sum_{\substack{(m,4Na)=1\\m\text{ square -free}\\m>|b|/\Delta}}a_nn^{-k}\chi_{Nbq}({m})\overline{\epsilon_{m}}{m}^{\frac{-1}{2}}e\bigg(\frac{\overline{4N}bd}{m}\bigg)
		V\bigg(\frac{2\pi n}{a^2|d|q\sqrt{N}}\bigg).
	\end{align*}
	Just as in section 8 of \cite{iwaniec1990order},
	\begin{align*}
		  \sum_b \sum_m\ll a^{3}q^{2}r^{-\frac{3}{2}}j^{-3}
		(X+h)^{\epsilon-\frac{1}{2}}.\\
	\end{align*}
	With this estimate, it is straightforward to show
	\begin{align*}
		S_1
		&  \ll \sum_{\substack{a\leq A \\(a,4N)=1}}
		\sum_{\substack{r,j}}
		\sum_{q|j}
		\sum_{X/a^2q\leq| d | \leq (X+h)/a^2q}
		a^{3}q^{2}r^{-\frac{3}{2}}j^{-3}
		(X+h)^{\epsilon-\frac{1}{2}}\\
		&  \ll (X+h)^{\epsilon-\frac{1}{2}}h\sum_{\substack{a\leq A }}a
		\sum_{\substack{r,j}}r^{\frac{-3}{2}}j^{-3}
		\sum_{q|j}q
		\\
		&  \ll A^2(X+h)^{\epsilon-\frac{1}{2}}h.
	\end{align*}\end{proof}

	It remains to evaluate $S_0$. Because of the condition $b=0$, $S_0$ is written in the following simplified form 
	\begin{align*}
		S_0&=2\sum_{\substack{a\leq A \\(a,4N)=1}}\mu(a)
		\sum_{\substack{n=rj^2\\(n,a)=1}}a_nn^{-k}
		\sum_{q|j}\mu(q)\\&\times
		\sum_{\substack{dq\in \mathcal{D}\\ X/a^2q\leq |d|\leq (X+h)/a^2q}} 
		V(\frac{2\pi n}{a^2|d|q\sqrt{N}}).
	\end{align*} 
	We then evaluate the innermost sum. We split the inner sum into residue classes mod $4N$. Each class contributes 
	\begin{align*}
		\frac{1}{4N}\int_{X/a^2q}^{(X+h)/a^2q}V\bigg(\frac{2\pi n}{(a^2tq)\sqrt{N}}\bigg)dt+O\bigg(\bigg(1+\frac{n}{X}\bigg)^{-\epsilon}\bigg)
	\end{align*}
	by Euler's summation formula. Hence
	\begin{align*}
		S_0&=2\sum_{\substack{a\leq A \\(a,4N)=1}}\mu(a)
		\sum_{\substack{n=rj^2\\(n,a)=1}}a_nn^{-k}
		\sum_{q|j}\mu(q)\\&\times\bigg[\frac{\gamma(4N)}{4N}\frac{h}{a^2q}\int_{0}^{1}V\bigg(\frac{2\pi n}{(ht+X)\sqrt{N}}\bigg)dt+O\bigg(\bigg(1+\frac{n}{X}\bigg)^{-\epsilon}\bigg)
		\bigg].
	\end{align*} 
 	Here, $\gamma(4N)$ is the order of $\mathcal{D}$. The second term in the inner sum contributes $O(AX^{1/2+\epsilon})$, by trivial summation over $r,j$.

	As in section 9 of \cite{iwaniec1990order}, the first term in the innermost sum contributes
	\begin{align}\label{maintermcontribution}
		\gamma(4N)h\sum_{\substack{n=rj^2}}\frac{a_n\phi(j)}{2Nn^kj}
		\sum_{\substack{a\leq A \\(a,4Nj)=1}}\frac{\mu(a)}{a^2}
		\int_{0}^{1}V\bigg(\frac{2\pi n}{(ht+X)\sqrt{N}}\bigg)dt.
	\end{align}
	Now, using the identity 
	\begin{align*}
		&\sum_{\substack{a\leq A \\(a,4Nj)=1}}\frac{\mu(a)}{a^2}
		=\frac{6}{\pi^2}\prod_{p\mid 4Nj}(1-p^{-2})^{-1} +O(A^{-1}),\\
	\end{align*}
	we split (\ref{maintermcontribution}) into two. The first term is 
	\begin{align*}
		&\frac{3\gamma(4N)}{\pi^2N}\prod_{p|4N}(1-p^{-2})^{-1}h\sum_{\substack{n=rj^2}}\frac{a_n\phi(j)}{n^kj}\prod_{p|j}(1-p^{-2})^{-1}
		\int_{0}^{1}V\bigg(\frac{2\pi n}{(ht+X)\sqrt{N}}\bigg)dt\\
		&=\frac{3\gamma(4N)}{\pi^2N}\prod_{p|4N}(1-p^{-2})^{-1}h
		\int_{0}^{1}\bigg[\sum_{\substack{n=rj^2}}\frac{a_n}{n^k}\prod_{p|j}(1+p^{-1})^{-1}V\bigg(\frac{2\pi n}{(ht+X)\sqrt{N}}\bigg)\bigg]dt.\\
	\end{align*}
	Also, the second term contributes $O(A^{-1}hX^\epsilon)$, by trivial summation over $r,j$.

	Thus, finally,
	\begin{align*}
		S_0&=\frac{3\gamma(4N)}{\pi^2N}\prod_{p|4N}(1-p^{-2})^{-1}h
		\int_{0}^{1}\mathcal{B}(ht+X)dt\\
		&+O(A^{-1}hX^{\epsilon}+AX^{1/2+\epsilon}),
	\end{align*} 
	where 
	\begin{align*}
		\mathcal{B}(x)=\sum_{\substack{n=rj^2}}\frac{a_n}{n^k}\prod_{p|j}(1+p^{-1})^{-1}V\bigg(\frac{2\pi n}{x}\bigg).
	\end{align*}
	By shifting the line of integration to $(-\frac{1}{5})$, 
	\begin{align*}
		\mathcal{B}(x)
		&=\frac{1}{2\pi i}\int_{(4/5)}\frac{\Gamma(k+s)}{\Gamma(k)}\sum_{\substack{n=rj^2}}\frac{a_n}{n^{k+s}}\prod_{p|j}(1+p^{-1})^{-1}\bigg(\frac{2\pi }{x}\bigg)^{-s}\frac{ds}{s}\\
		&=-\frac{1}{2\pi i}\int_{(-1/5)}\frac{\Gamma(k+s)}{\Gamma(k)}\bigg(\frac{2\pi }{x}\bigg)^{-s}L_f(k+s)\frac{ds}{s}+L_f(k)\\
	\end{align*}
	
	Note that $L_f(s)$ appeared in \cite{iwaniec1990order} as $L(s)$ for $k=1$, and it was later generalized in \cite{luo1997determination} as $L_{f,1}(s)$ for all positive integer $k$. Recall that $L_f(k)\neq 0$ and $L_f(s)$  is polynomially bounded if $\Re(s)>k-1/4$. Thus we have
	\begin{align}
		\mathcal{B}(x)=L_f(k)+O_f(x^{-\frac{1}{5}}).\label{betabound}
	\end{align}
	From (\ref{betabound}), we obtain
	\begin{align*}
		S_0=\frac{3\gamma(4N)}{\pi^2N}\prod_{p|4N}(1-p^{-2})^{-1}L_f(k)h
		+O(h^{\frac{4}{5}}+A^{-1}hX^{\epsilon}+AX^{\frac{1}{2}+\epsilon}).
	\end{align*}
	Now we collect all the error terms of $S_f(X,h)$.  They are
	\begin{align*}
		O_{f,\epsilon}(h^{\frac{4}{5}}&+A^{-1}hX^\epsilon+AX^{\frac{1}{2}+\epsilon}+A^2(X+h)^{\epsilon-\frac{1}{2}}h\\&+ A^{-1-\epsilon}(X+h)^{\epsilon+\frac{1}{2}}h^{\frac{1}{2}}+ (X+h)^{\frac{1}{2}}hA^{-3+\epsilon}).
	\end{align*}
	
	Let $A=X^{\frac{1}{4}-\epsilon}$. Then for $ X^{3/4+\epsilon}\leq h\leq X$, the main term dominates the error terms. We have
	\begin{align*}
		S_f(X,h)&=\frac{3\gamma(4N)}{\pi^2N}\prod_{p|4N}(1-p^{-2})^{-1}L_{f}(k)h
		+O_{f,\epsilon}(hX^{-\epsilon})	.
	\end{align*}

	\section*{Acknowledgement}
	This work was supported by the National Research Foundation of Korea (NRF) grant funded by the Korea government(MSIT)(No. 2019R1A2C108860913, No. 2020R1A4A1016649). The author thanks his advisor Hae-Sang Sun for helpful comments and discussions. The author also thanks to Yoonbok Lee, Jaeseong Kwon and Seongjae Han for their  helpful comments and discussions.    
	
	\bibliography{reference}
	\bibliographystyle{plain}

\end{document}